\theoremstyle{plain}      
\newtheorem{thm}{Theorem}[section]     
\newtheorem{theorem}[thm]{Theorem}     
\newtheorem{lemma}[thm]{Lemma}     
\newtheorem{proposition}[thm]{Proposition}
\theoremstyle{remark}      
\newtheorem{remark}[thm]{Remark} 
\theoremstyle{definition}      
\newtheorem{definition}[thm]{Definition}     
\newcommand{\wt}{\alpha}
\def\C{\mathbb{C}}
\def\K{\mathbb{K}}
\def\L{\mathbb{L}}
\def\Q{\mathbb{Q}}
\def\R{\mathbb{R}}
\def\Z{\mathbb{Z}}
\def\calA{\mathcal{A}}
\def\calV{\mathcal{V}}
\def\calW{\mathcal{W}}
\def\scrA{\mathscr{A}}
\def\scrC{\mathscr{C}}
\def\scrT{\mathscr{T}}
\DeclareMathOperator{\Log}{Log} 
\DeclareMathOperator{\trop}{trop} 
\let\witi\widetilde
\newcommand{\defcolor}[1]{\Blue{#1}}
\newcommand{\demph}[1]{\defcolor{{\sl #1}}}
\begin{document}     



\title[Higher convexity of tropical varieties]{Higher convexity for complements\\ of tropical varieties}  
 
\author{Mounir Nisse}
\address{School of Mathematics KIAS, 87 Hoegiro Dongdaemun-gu, Seoul
130-722, South Korea.}
\email{mounir.nisse@gmail.com}
\author{Frank Sottile}
\address{Department of Mathematics\\
         Texas A\&M University\\
         College Station\\
         Texas\\
         USA}
\email{sottile@math.tamu.edu}
\urladdr{www.math.tamu.edu/\~{}sottile}
\thanks{Research of Sottile is supported in part by NSF grant DMS-1001615.}
\thanks{This is based upon work done at the NIMS, Daejeon, Korea, This work was supported by National Institute
for Mathematical Sciences 2014 Thematic Program} 
\subjclass[2010]{14T05}
%
%
\keywords{} 

\begin{abstract}
 We consider Gromov's homological higher convexity for complements of tropical varieties, establishing it 
 for complements of tropical hypersurfaces and 
 curves, and for nonarchimedean amoebas of varieties that are complete intersections over the field of complex
 Puiseux series.  
 Based on these results, we conjecture that the complement of a tropical variety has this higher convexity, and 
 prove a weak form of this conjecture for the nonarchimedean amoeba of any variety over the
 complex Puiseux field.
 One of our main tools is Jonsson's limit theorem for tropical varieties.
\end{abstract}

\maketitle


A tropical hypersurface is a polyhedral complex in $\R^n$ of pure dimension $n{-}1$ that is dual to a
regular subdivision of a finite set of integer vectors.
This implies that every connected component of its complement is convex.
A classical (archimedean) amoeba of a complex hypersurface also has the property that every 
connected component of its complement is convex~\cite[Ch.~6, Cor.~1.6]{GKZ}.

Gromov~\cite[\S$\tfrac{1}{2}$]{Gromov} introduced  higher convexity.
A subset $X\subset\R^n$ is \demph{$k$-convex} if for all affine planes $L$ of dimension $k{+}1$, the natural  
map on $k$th reduced homology
 \begin{equation}\label{Eq:iota_k}
   \iota_k\ \colon\ \witi{H}_k(X\cap L)\ \longrightarrow\ 
     \witi{H}_k(X)
 \end{equation}
is an injection.
Connected and 0-convex is ordinary convexity.
Henriques~\cite{He04} rediscovered this notion and conjectured that the complement of an amoeba of a variety of
codimension $k{+}1$ in $(\C^\times)^n$ is $k$-convex, and established a weak form of this conjecture: the map
$\iota_k$ sends no positive class to zero~\cite{He04}. 
Bushueva and Tsikh~\cite{BT12} used complex analysis to prove Henriques' conjecture when
the variety is a complete intersection. 
%
%
Other than these cases, Henriques' conjecture remains open.

An amoeba is the image in $\R^n$ of a subvariety $V$ of the torus $(\C^\times)^n$ under the coordinatewise
map $z\mapsto \log|z|$.
Similarly, the coamoeba is the image in $(S^1)^n$ of $V$ under the coordinatewise argument map.
Lifting to the universal cover and taking closure gives the lifted coamoeba in $\R^n$.
There is also a nonarchimedean coamoeba and a lifted nonarchimedean coamoeba~\cite{NSna}.
The complement of either type of lifted coamoeba of a variety of codimension $k{+}1$ is
$k$-convex~\cite{NSconvex}, which was proven using tropical geometry. 

We investigate Gromov's higher convexity for complements of tropical varieties.
We show that the complement of a tropical curve in $\R^n$ is 
$(n{-}2)$-convex.
Both this and the convexity of tropical hypersurface complements rely only on some
properties of tropical varieties and hold for polyhedral complexes having these properties.
This leads us to conjecture that the complement of a tropical variety of pure codimension $k{+}1$ is
$k$-convex. 

Suppose that $\K$ is an algebraically closed field having a nonarchimedean valuation with
value group dense in $\R$ and let $V\subset(\K^\times)^n$ be a subvariety. 
The closure in $\R^n$ of the image of $V$ under the coordinatewise valuation map is its nonarchimedean
amoeba, which is a tropical variety.
When $\K$ is the field of complex Puiseux series, methods from analysis, both archimedean and
nonarchimedean, may be used to study nonarchimedean amoebas.
Jonsson showed that such a nonarchimedean amoeba is a limit of archimedean amoebas~\cite[Thm.~B]{J14},
generalizing work of Rullg{\aa}rd~\cite[Thm.~9]{Ru01} and Mikhalkin~\cite[Cor.~6.4]{Mi04} for
hypersurfaces. 
This, together with a technical lemma, allows us to use the result of Bushueva and Tsikh~\cite{BT12} to establish
our conjecture for the nonarchimedean amoeba of a complete intersection in $(\K^\times)^n$.
The weak form (that the map $\iota_k$~\eqref{Eq:iota_k} sends no positive cycle to zero) for any subvariety of
$(\K^\times)^n$ also follows by Henriques's result for amoebas.

This paper is organized as follows.
In Section~\ref{S:one} we give background material.
In Section~\ref{S:two}, we define combinatorial tropical hypersurfaces and curves in $\R^n$, and prove that their
complements are $0$-convex and $(n{-}2)$-convex, respectively.
In Section~\ref{S:three}, we prove our
results about nonarchimedean amoebas stated above.

\section{Background}\label{S:one}

We provide some background on tropical varieties, state Jonsson's limit theorem, and discuss higher
convexity.  

\subsection{Tropical varieties}
The map $z\mapsto \log|z|$ is a homomorphism from the non-zero complex numbers $\C^\times$ to the real
numbers $\R$. 
This induces the map $\Log\colon(\C^\times)^n\to\R^n$.
The image of subvariety $V$ of $(\C^\times)^n$ under
$\Log$ is its \demph{amoeba}, \defcolor{$\scrA(V)$}.

Let $\K$ be an algebraically closed valued field whose value group $G$ is a non-zero divisible additive
subgroup of $\R$. 
Its valuation is a surjective homomorphism $\nu\colon\K^\times\to G$, which induces a map
$\nu\colon(\K^\times)^n\to G^n$. 
The closure in $\R^n$ of the image of a variety $V\subset(\K^\times)^n$ under the map $\nu$ is its
\demph{nonarchimedean amoeba}, \defcolor{$\scrT(V)$}.

There is an equivalent definition of $\scrT(V)$.
An integer vector $a\in\Z^n$ forms the exponents of a Laurent monomial,
$\defcolor{x^a}:=x_1^{a_1}\dotsb x_n^{a_n}$.
A \demph{Laurent polynomial $f$} is a $\K$-linear combination of Laurent monomials,
\[
   f\ =\ \sum_{a\in\calA} c_a x^a
   \qquad\mbox{where}\qquad
   c_a\in\K^\times\,.
\]
The finite subset $\calA\subset\Z^n$ is the \demph{support} of $f$.
The coordinate ring of $(\K^\times)^n$ is the ring of Laurent polynomials
$\K[x_1,x_1^{-1},\dotsc,x_n,x_n^{-1}]$.
Given a vector $w\in\R^n$ and a Laurent polynomial $f$ with support $\calA\subset\Z^n$,
we have a piecewise linear map 
 \begin{equation}\label{Eq:Trop_Fn}
   \R^n\ni z\ \longmapsto\ 
   \min\{ \nu(c_a)+w\cdot a\,|\, a\in\calA\}\,.
 \end{equation}
The tropical hypersurface \demph{$\trop(f)$} is the set where this minimum occurs at least twice.
This is also the set where the piecewise linear map~\eqref{Eq:Trop_Fn} is not differentiable. 
Given a variety $V\subset(\K^\times)^n$, let $I$ be its ideal in the ring of Laurent
polynomials.
Its tropical variety is
\[
   \bigcap_{f\in I} \trop(f)\,.
\]
By the fundamental theorem of tropical geometry~\cite{MaSt}, this
tropical variety equals its nonarchimedean amoeba, $\scrT(V)$.
If $V$ has dimension $r$, then $\scrT(V)$ admits (non-canonically) the
structure of a polyhedral complex of pure dimension $r$.
There are positive integral weights \demph{$\wt_\sigma$} associated to each polyhedron $\sigma$ of
maximal dimension $r$ so that the weighted complex is \demph{balanced}.
We explain this.
Let $\tau$ be an $(r{-}1)$-dimensional polyhedron in $\scrT(V)$.
Modulo the affine span \defcolor{$\langle\tau\rangle$} of $\tau$, each
$r$-dimensional polyhedron $\sigma$ incident 
on $\tau$  ($\sigma\in\mbox{star}(\tau)$) determines a primitive vector \defcolor{$v_\sigma$}.
The balancing condition is that 
\[
   \sum_{\sigma\in\mbox{\scriptsize star}(\tau)} \wt_\sigma v_\sigma\ =\ 0\ \ \mod \langle\tau\rangle\,.
\]

We are primarily concerned with nonarchimedean amoebas when the field $\K$ is the complex
Puiseux field.
This algebraically closed field contains the field of rational functions $\C(s)$, which is the quotient field of
the ring of univariate polynomials $\C[s]$.
An element of the Puiseux field is a fractional power series of the form
\[
   c\ =\ \sum_{m\geq 0} a_m s^{\frac{p+m}{q}}\,,
\]
where the coefficients $a_m$ are complex numbers and $p,q$ are integers with $q>0$.
The valuation $\nu(c)$ is the minimum exponent appearing in the power series $c$ with
a non-zero coefficient.
This is a homomorphism $\nu\colon\K^\times\to\Q$.

\subsection{Jonsson's limit theorem}

Let \demph{$d(x,y)$} be the Euclidean distance in $\R^n$ between $x$ and $y$.
If $\emptyset\neq A\subset\R^n$ is closed and $x\in\R^n$, then the distance between $x$ and $A$ is 
\[
   \defcolor{d(x,A)}\ :=\ \inf\{d(x,a)\mid a\in A\}\,,
\]
which is attained as $A$ is closed.
If $B$ is closed, then 
$d(A,B)=\inf\{d(a,b)\mid a\in A, b\in B\}$.
A family $\{A_t\mid t>0\}$ of closed subsets of $\R^n$ has \demph{Kuratowski limit} $\scrT\subset\R^n$ if we have
the following equality,
 \begin{eqnarray}
  \scrT&=& 
   \{x\in\R^n\mid \forall\epsilon>0\; \exists\delta>0\ \mbox{such that}\  \nonumber 
             0<t<\delta \Rightarrow d(x,A_t)<\epsilon\}\\
   &=&   \{x\in\R^n\mid \forall\epsilon>0\; \forall\delta>0\; \label{Eq:AP}
             \exists 0<t<\delta\ \mbox{with}\  d(x,A_t)<\epsilon\}\,.
 \end{eqnarray}
When this occurs, we write
\[
   \lim_{t\to0} A_t\ =\ \scrT\,.
\]
This is distinct from the more familiar notion of Hausdorff limit:
Consider the family of lines in the plane $\R^2$ through the origin with slope $t>0$.
As $t\to 0$, these lines have Kuratowski limit the $x$-axis, but they do not have a Hausdorff limit.

Observe that the Kuratowski limit $\scrT$ is closed.
More interesting is if $Z$ is a compact set disjoint from $\scrT$.

\begin{lemma}\label{L:compacta}
 Suppose that $\{A_t\mid t>0\}$ is a family of closed subsets of $\R^n$  with Kuratowski limit $\scrT$.
 If $Z$ is a compact subset of\/ $\R^n$ that is disjoint from $\scrT$, then there is a $\delta>0$ such that if
 $0<t<\delta$, then $Z\cap A_t=\emptyset$.
\end{lemma}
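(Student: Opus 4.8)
The plan is to prove the contrapositive via a compactness argument. Suppose for contradiction that no such $\delta$ exists. Then for every $\delta>0$ there is some $t$ with $0<t<\delta$ and $Z\cap A_t\neq\emptyset$; choosing $\delta=1/k$ for $k=1,2,\dotsc$ produces a sequence $t_k\to 0$ together with points $x_k\in Z\cap A_{t_k}$. Since $Z$ is compact, after passing to a subsequence we may assume $x_k\to x$ for some $x\in Z$. I claim $x\in\scrT$, which contradicts the hypothesis that $Z$ is disjoint from $\scrT$.

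To verify $x\in\scrT$, I would use the first characterization of the Kuratowski limit in~\eqref{Eq:AP}: I must show that for every $\epsilon>0$ there is a $\delta>0$ such that $0<t<\delta$ implies $d(x,A_t)<\epsilon$ — wait, that is not quite what the sequence gives, so instead I use the \emph{second} description, that for every $\epsilon>0$ and every $\delta>0$ there exists $t$ with $0<t<\delta$ and $d(x,A_t)<\epsilon$. Given $\epsilon>0$ and $\delta>0$, pick $k$ large enough that $t_k<\delta$ and $d(x,x_k)<\epsilon$ (possible since $t_k\to 0$ and $x_k\to x$). Since $x_k\in A_{t_k}$, we have $d(x,A_{t_k})\le d(x,x_k)<\epsilon$, and $0<t_k<\delta$, so $x$ satisfies the membership condition for every $\epsilon,\delta$. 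Hence $x\in\scrT$, the desired contradiction.

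The argument is essentially routine; the only point requiring care is extracting the convergent subsequence, which is exactly where compactness of $Z$ is used — the statement genuinely fails for merely closed $Z$ (e.g. $Z$ a line parallel to and disjoint from a limiting line in the plane example, with $A_t$ the family of slowly rotating lines). A second minor subtlety is making sure to invoke the correct one of the two equivalent formulations of the Kuratowski limit in~\eqref{Eq:AP}; the ``$\forall\epsilon\,\forall\delta\,\exists t$'' form is the convenient one here since the sequence only controls $A_t$ along a subsequence of parameters approaching $0$. No genuine obstacle is anticipated.
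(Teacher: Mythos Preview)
Your argument is correct. Both your proof and the paper's use the second characterization~\eqref{Eq:AP} of the Kuratowski limit together with compactness of $Z$, but the implementations differ slightly. The paper argues directly: for each $z\in Z$, negating~\eqref{Eq:AP} gives $\epsilon_z,\delta_z>0$ with $d(z,A_t)>\epsilon_z$ for all $0<t<\delta_z$, and then a finite-subcover argument upgrades these to uniform $\epsilon,\delta$ over all of $Z$. You instead argue by contradiction via sequential compactness, extracting a limit point $x\in Z$ and showing it lies in $\scrT$. Both are routine and equivalent in strength; your version has the minor advantage of making transparent exactly which half of the Kuratowski definition is being invoked, while the paper's version yields the slightly stronger conclusion that $d(Z,A_t)$ is bounded below by a fixed $\epsilon>0$ (not merely that the intersection is empty). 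As a stylistic note, you should excise the mid-proof ``wait'' aside in a final write-up.
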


\begin{proof}
 By~\eqref{Eq:AP}, if $z\in Z$, so that $z\not\in\scrT$, then there are $\epsilon,\delta>0$ such that for every
 $0<t<\delta$, we have $d(z,A_t)>\epsilon$.
 Since $Z$ is compact, there are $\epsilon,\delta>0$ such that if $z\in Z$ and $0<t<\delta$, then 
 $d(z,A_t)>\epsilon$.
 In particular, $0<t<\delta$ implies that $Z\cap A_t=\emptyset$.
\end{proof}

Let $\calV\subset\C^\times\times(\C^\times)^n$ be a subvariety whose every component maps
dominantly onto the first factor, $\C^\times$, which has coordinate $s$.
Then $\calV$ is a family of varieties over an open subset $U$ of $\C^\times$, with fiber $\calV_s$ over
$s\in U$.
Equivalently, $\calV$ is a variety in the torus $(\C(s)^\times)^n$ over $\C(s)$.
Extending scalars to the Puiseux field $\K$ gives a variety
$V\subset(\K^\times)^n$ with tropicalization $\scrT(V)$.
In this context, Jonsson~\cite{J14} proved the following.

\begin{proposition}[Jonsson]\label{P:Jonsson}
 We have 
 ${\displaystyle  \lim_{s\to 0}\tfrac{-1}{\log|s|} \scrA(\calV_s)\ =\ \scrT(V)}$.
\end{proposition}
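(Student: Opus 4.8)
The plan is to verify directly the two inclusions encoded in the definition~\eqref{Eq:AP} of the Kuratowski limit. Write $A_s:=\tfrac{-1}{\log|s|}\scrA(\calV_s)$. For any family, the set described by the first line of~\eqref{Eq:AP} (the \emph{lower limit}: points within every $\epsilon$ of $A_s$ for all sufficiently small $s$) is contained in the set described by the second line (the \emph{upper limit}: points within every $\epsilon$ of $A_s$ for arbitrarily small $s$), and equality of the two is precisely what it means for the Kuratowski limit to exist, in which case it is their common value. So it suffices to show $\scrT(V)$ is contained in the lower limit and the upper limit is contained in $\scrT(V)$.

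\emph{Containment of $\scrT(V)$ in the lower limit.} The lower limit is closed: if $x_j\to x$ with $d(x_j,A_s)\to 0$ as $s\to 0$ for each $j$, then $d(x,A_s)\le d(x,x_j)+d(x_j,A_s)$ forces $d(x,A_s)\to 0$. By the fundamental theorem of tropical geometry, applied over the algebraically closed field $\overline{\C(s)}$ of algebraic Puiseux series (branches of algebraic functions, hence convergent on a punctured disc about $0$, and with value group $\Q$), $\scrT(V)$ is the closure of the set of valuations $\nu(\xi)$ of points $\xi=(\xi_1,\dots,\xi_n)\in V$ with coordinates in $\overline{\C(s)}$. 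Fix such a $\xi$. Because every component of $\calV$ maps dominantly onto $\C^\times$, clearing denominators in equations cutting out $\calV$ over $\C(s)$ shows that $\xi$ specializes to a point $\xi(s)\in\calV_s$ for all $s$ in some punctured disc about $0$. Each $\xi_i$ is a nonzero convergent Puiseux series, so $\log|\xi_i(s)|=\nu(\xi_i)\log|s|+O(1)$ as $s\to 0$; hence $A_s\ni\tfrac{-1}{\log|s|}\Log(\xi(s))\to\nu(\xi)$, so $\nu(\xi)$ lies in the lower limit. Taking closures, so does $\scrT(V)$.

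\emph{Containment of the upper limit in $\scrT(V)$.} Suppose $w$ lies in the upper limit, so there are $s_k\to 0$ and points $\xi^{(k)}\in\calV_{s_k}$ with $\tfrac{-1}{\log|s_k|}\Log(\xi^{(k)})\to w$. Let $f=\sum_{a\in\calA}c_a x^a$ be any Laurent polynomial with coefficients in $\overline{\C(s)}$ that vanishes on $V$; such $f$ suffice to cut out $\scrT(V)=\bigcap_f\trop(f)$, again by the fundamental theorem over $\overline{\C(s)}$. Since $f$ vanishes on $V$, its specialization $f_s$ vanishes on $\calV_s$ for all but finitely many $s$, so $\sum_a c_a(s_k)\,(\xi^{(k)})^a=0$ for all but finitely many $k$. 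By the estimate above, $\tfrac{-1}{\log|s_k|}\log\bigl|c_a(s_k)(\xi^{(k)})^a\bigr|\to\nu(c_a)+a\cdot w$ for each $a\in\calA$. If $\min_{a\in\calA}(\nu(c_a)+a\cdot w)$ were attained at a single index $a_0$, the modulus of that term would dominate every other as $k\to\infty$, impossible in the vanishing sum $\sum_a c_a(s_k)(\xi^{(k)})^a=0$; hence this minimum is attained at least twice, i.e.\ $w\in\trop(f)$. As $f$ was arbitrary, $w\in\bigcap_f\trop(f)=\scrT(V)$.

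The analytic facts invoked --- the asymptotic $\log|c(s)|=\nu(c)\log|s|+O(1)$ for a nonzero convergent Puiseux series, and the domination of moduli in a vanishing complex sum --- are routine. I expect the delicate point to be the first containment: producing, for a set of points dense in $\scrT(V)$, honest points of $V$ with convergent coordinates that specialize into the fibres $\calV_s$ for all small $s$. This is exactly where the hypothesis that every component of $\calV$ dominates $\C^\times$ is needed, and where one must check that the scheme-theoretic fibre $\calV_s$ coincides with the naive common zero locus of the cleared equations for all but finitely many $s$, none of which accumulate at $0$ (so that a small enough punctured disc contains no exceptional $s$). Finally, applying the argument of the third paragraph to a single defining equation of a hypersurface, together with the first containment, recovers the theorems of Rullg{\aa}rd and Mikhalkin for that case.
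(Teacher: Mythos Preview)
The paper does not give its own proof of Proposition~\ref{P:Jonsson}: it is quoted as a theorem of Jonsson~\cite[Thm.~B]{J14}, and in the proof of Proposition~\ref{P:limit} the paper explicitly refers the reader to ``Section~4 of~\cite{J14}'' for the argument. So there is no proof in the present paper to compare your proposal against.

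On its own merits, your two-inclusion strategy is a reasonable and more elementary route than Jonsson's, which goes through Berkovich analytifications. The upper-limit containment (a vanishing finite sum cannot have a single term whose modulus dominates all others) is the standard archimedean/nonarchimedean dictionary and is fine; you should, however, justify that it suffices to test $w\in\scrT(V)$ against polynomials $f$ with coefficients in $\overline{\C(s)}$ rather than in all of $\K$ (this is invariance of tropicalization under extension of algebraically closed nontrivially valued fields, but it deserves a sentence). For the lower-limit containment, your specialization step is essentially correct: a point $\xi\in V(\overline{\C(s)})$ satisfies every $f\in I(\calV)\subset\C[s^{\pm},x^{\pm}]$, and since these $f$ are polynomial in $s$, specialization at any $s_0$ in the punctured disc of convergence of the (finitely many, algebraic, hence convergent) $\xi_i$ gives $\xi(s_0)\in\calV_{s_0}$ directly---no ``clearing denominators'' or exclusion of finitely many bad $s$ is needed once you work with the ideal in $\C[s^{\pm},x^{\pm}]$. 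One genuine slip: from $\log|\xi_i(s)|=\nu(\xi_i)\log|s|+O(1)$ you get $\tfrac{-1}{\log|s|}\Log(\xi(s))\to -\nu(\xi)$, not $\nu(\xi)$; this is the usual sign discrepancy between $\Log$ and $\nu$, and should be reconciled with whatever convention makes the statement of the proposition literally true.
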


For this, $\nu(s)=1$.
More generally, $\scrT(V)$ should be scaled by $\nu(s)$.
To summarize Jonsson's Theorem, the nonarchimedean amoeba of $V$ is the limit of (appropriately scaled) amoebas of
fibers of the family $\calV$.
Stated in this way, Jonsson's Theorem holds in the broader context of tropicalizations of varieties in
$(\K^\times)^n$. 

\begin{proposition}\label{P:limit}
 Let $W\subset(\K^\times)^n$ be any variety.
 Then there is a smooth curve $C$, a point $o\in C$, a local parameter $u$ at $o$, and a family of varieties
 $\calV\subset(C\smallsetminus\{o\})\times(\C^\times)^n$ over $C\smallsetminus\{o\}$ with fiber $\calV_a$ over
 $a\in C\smallsetminus\{o\}$ such that
\[
    \lim_{a\to o} \tfrac{-1}{\log|u(a)|} \scrA(\calV_a)\ =\ \scrT(W)\,.
\]
 If $W$ is a complete intersection, then we may choose the family $\calV$ so that every fiber $\calV_a$ is a
 complete intersection.
\end{proposition}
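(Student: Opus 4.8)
The plan is to deduce Proposition~\ref{P:limit} from Jonsson's theorem (Proposition~\ref{P:Jonsson}), which already treats algebraic families over $\C^\times$, i.e.\ varieties defined over $\C(s)$. Two features of a general $W\subset(\K^\times)^n$ must be removed first: the coefficients of its defining equations are Puiseux series, hence may (i) involve fractional powers of $s$ and (ii) be infinite power series rather than rational functions of $s$. I would handle (i) by a base change $s\mapsto s^{1/N}$, replacing the $s$-line by a smooth curve carrying a genuine local parameter, and (ii) by truncating each power-series coefficient to a Laurent polynomial; the defining equations then have coefficients in the function field of that curve and cut out a bona fide algebraic family $\calV$. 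The whole point is to check that a sufficiently deep truncation changes neither $\scrT(W)$ nor, in the complete-intersection case, the complete-intersection property.

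In detail, let $I\subset\K[x_1^{\pm},\dots,x_n^{\pm}]$ be the ideal of $W$ and fix generators $f_1,\dots,f_r$; when $W$ is a complete intersection take $r=n-\dim W$ of them forming a regular sequence. Let $N$ be a common denominator for the $s$-exponents occurring among the finitely many coefficients of the $f_j$, set $u:=s^{1/N}$, and, after multiplying each $f_j$ by a power of $u$, assume all coefficients lie in $\C[[u]]$. Take $C:=\mathbb{A}^1$ with coordinate $u$ and $o:=0$, so that $u$ is a local parameter at $o$. For $D\gg 0$, let $f_j^{(D)}\in\C[u,u^{-1}][x_1^{\pm},\dots,x_n^{\pm}]$ be obtained from $f_j$ by deleting all terms of $u$-order $\geq D$; the ideal $I^{(D)}:=(f_1^{(D)},\dots,f_r^{(D)})$ defines a subvariety $\calV\subset(C\smallsetminus\{o\})\times(\C^\times)^n$. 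After shrinking $C$ around $o$ we may assume $\calV$ has no component lying over a point of $C$ and --- in the complete-intersection case, granting the claim below that $V(I^{(D)})$ is again a complete intersection --- that every fibre $\calV_a$ is a complete intersection. Granting also that $\scrT(V(I^{(D)}))=\scrT(W)$ up to the dilation of $\R^n$ coming from the substitution $s=u^N$, Proposition~\ref{P:Jonsson} applied to $\calV$ yields $\lim_{a\to o}\tfrac{-1}{\log|u(a)|}\,\scrA(\calV_a)=\scrT(W)$, the dilation being absorbed into the normalization of $\nu$ as in the remark following Proposition~\ref{P:Jonsson}.

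The step I expect to be the real obstacle is the truncation-stability of $\scrT$. For a single Laurent polynomial it is immediate: once $D$ exceeds the $u$-orders of all coefficients of $f_j$, truncation does not move the point at which the minimum in~\eqref{Eq:Trop_Fn} is attained twice, so $\trop(f_j^{(D)})=\trop(f_j)$. The difficulty is that $\scrT(W)=\bigcap_{f\in I}\trop(f)$ is an infinite intersection, in general strictly smaller than $\bigcap_j\trop(f_j)$. To control it one uses that $I$ has a finite tropical basis $h_1,\dots,h_m$ with $\scrT(W)=\bigcap_i\trop(h_i)$ \cite{MaSt}; equivalently, $\scrT(W)$ is determined by the finitely many initial ideals $\ini_w(I)$ arising along the Gröbner complex, each an ideal over the residue field $\C$ whose computation from $f_1,\dots,f_r$ uses only finitely many comparisons of valuations and finitely many leading forms. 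Since the ring operations producing this data are continuous in the $u$-adic topology and involve only finitely many terms of their inputs, a deep enough truncation of the $f_j$ leaves it --- hence $\scrT$ --- unchanged. The complete-intersection property is handled in the same spirit: it is witnessed by the non-vanishing of a suitable polynomial in the coefficients (a maximal minor of a Macaulay-type matrix, say), an element of $\K^\times$ whose $u$-order is not moved by a deep enough truncation; the same applies to the generic fibre over $\C$. Once this truncation-stability is established, everything else is routine bookkeeping around Jonsson's theorem.
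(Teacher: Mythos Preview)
Your approach is sound but takes a different route from the paper's. The paper does not truncate at all: it invokes Katz's theorem (stated here as Proposition~\ref{P:Katz}), which hands you, for any $W\subset(\K^\times)^n$, a variety $W'$ defined over a finite extension $\L=\C(C)$ of $\C(s)$ with the \emph{same tropicalization and the same Hilbert polynomial} as $W$. The curve $C$ and the point $o\in\pi^{-1}(0)$ come straight from this extension, and Jonsson's argument is then rerun essentially verbatim over $C$. What you are doing --- base-changing to kill fractional exponents and then truncating coefficients to land in $\C[u]$ --- is precisely a hands-on proof of (a form of) Katz's statement; the ``truncation stability of $\scrT$'' that you correctly flag as the crux is exactly the content the paper outsources to~\cite{Katz}. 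Your sketch via finiteness of the Gr\"obner complex and $u$-adic continuity of Buchberger-type computations is the right idea and can be made rigorous, but it is real work, not bookkeeping; the paper's version is shorter because it cites that work rather than redoing it.

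On the complete-intersection clause the two arguments also diverge. The paper uses that Katz's $W'$ has the same Hilbert polynomial as $W$ to conclude $W'$ is again a complete intersection, then passes to fibers by flatness over an open set of $C$. Your route via ``non-vanishing of a Macaulay minor'' is more complicated than necessary: once you have established that truncation preserves $\scrT$, it preserves the dimension, and since you are still cutting by the same number $r=\operatorname{codim} W$ of equations, the generic fiber of $\calV$ is automatically a complete intersection; openness of this condition in the base then gives it for every fiber after shrinking $C$. So your argument works, but the cleanest way to package it is: truncation stability of $\scrT$ $\Rightarrow$ same dimension $\Rightarrow$ still a complete intersection, and you can drop the resultant-style witness entirely.
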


The proof we give uses the following result of Katz.

\begin{proposition}[Thm.~1.5~\cite{Katz}]\label{P:Katz}
 Let $W$ be a variety in $(\K^\times)^n$.
 Then there is a finite extension $\L$ of\/ $\C(s)$ and a variety $W'$ in $(\L^\times)^n$
 with the same tropicalization and the same Hilbert polynomial as $W$.
\end{proposition}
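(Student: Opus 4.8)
The plan is to descend $W$ to formal Laurent series, observe that both its tropicalization and its Hilbert polynomial are determined by finitely much combinatorial data, and then realize that data over an algebraic extension of $\C(s)$ by a high-order approximation. Since $W\subset(\K^\times)^n$ is cut out by finitely many Laurent polynomials, only finitely many Puiseux series occur among their coefficients, and these all lie in $\C((s^{1/Q}))$ for a common $Q$. Writing $t:=s^{1/Q}$, the field $F:=\C((t))$ is the $t$-adic completion of $F_0:=\C(t)$, which is a finite extension of $\C(s)$; normalize the valuation so that $\nu(t)=\tfrac1Q$, i.e.\ so that $\nu(s)=1$. Then $W$ is defined over $F$, and since $\K$ is the algebraic closure of $F$ its tropicalization is computed equivalently over $F$; moreover any finite extension $\L$ of $F_0$ is finite over $\C(s)$ and its valuation extending the $s$-adic one restricts to the one on $F$ used below, so it suffices to produce a variety $W'$ over such an $\L$ with $\trop(W')=\trop(W)$ and the same Hilbert polynomial. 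Fix a toric (or projective) compactification to give meaning to Hilbert polynomials, let $I$ be the ideal of $W$ in $F[x^{\pm}]:=F[x_1^{\pm1},\dots,x_n^{\pm1}]$, and let $\Sigma$ be its Gr\"obner complex over $F$ in the sense of Maclagan--Sturmfels~\cite{MaSt}: a finite polyhedral complex in $\R^n$ whose cells $\sigma$ carry initial ideals $\ini_\sigma(I)$ over the residue field $\C$, with $\trop(W)$ equal to the union of the cells on which $\ini_\sigma(I)$ is monomial-free, and with the Hilbert polynomial of $W$ equal to that of any $\ini_\sigma(I)$. I would choose a finite generating set $g_1,\dots,g_r$ of $I$ that is a Gr\"obner basis relative to every maximal cell of $\Sigma$ --- for instance, the union of the reduced Gr\"obner bases over the finitely many cells --- so that all of this data is visible in the $g_i$ and their initial forms.

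Next I would perturb the $g_i$ to high $t$-adic order without changing $\Sigma$ or any $\ini_\sigma(I)$. After clearing $t$-denominators and scaling by units of $\C[[t]]$, I would encode the assertion ``$\{g_i\}$ is a Gr\"obner basis relative to each maximal cell of $\Sigma$ with the prescribed initial ideals'' as a finite system of polynomial equations --- Buchberger-type identities $S_\sigma(g_i,g_j)=\sum_k h^\sigma_{ijk}g_k$ with prescribed supports, together with the equations cutting out each prescribed $\ini_\sigma(I)$ --- in the undetermined coefficients of the $g_i$ and of the $h^\sigma_{ijk}$, keeping $t$ as an auxiliary variable so that the system has coefficients in $\C[t]$. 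The $g_i$, with their $t$-adic power series coefficients, furnish a solution over $\C[[t]]$. Since the Henselization of $\C[t]_{(t)}$ --- the ring $\C\langle t\rangle$ of algebraic power series --- satisfies Artin's approximation property, there is a solution over $\C\langle t\rangle$ agreeing with the given one to any prescribed order $N$. Algebraic power series in $t$ are algebraic over $F_0$, so the finitely many coefficients of this new solution lie in a common finite extension $\L$ of $F_0$, and they generate an ideal $I'\subset\L[x^{\pm}]$ defining a subvariety $W'\subset(\L^\times)^n$.

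For $N$ larger than all the coefficient valuations occurring in the $g_i$, the perturbed generators $g_i'$ have, at every cell of $\Sigma$, the same initial form as the $g_i$, and, since they solve the Buchberger identities exactly, they form a Gr\"obner basis of $I'$ relative to $\Sigma$ with $\ini_\sigma(I')=\ini_\sigma(I)$ for every maximal cell $\sigma$. Hence $\Sigma$ is also the Gr\"obner complex of $I'$, so that $\trop(W')=\trop(W)$ and $W'$ has the same Hilbert polynomial as $W$; if ``variety'' is meant in the irreducible or reduced sense, one either adds the corresponding finite list of witnesses to the system or passes to a suitable component over a further finite extension. I expect the main obstacle to be this middle step: packaging ``the Gr\"obner complex is $\Sigma$, with these initial ideals'' as a single finite polynomial system over $\C[t]$ to which Artin approximation genuinely applies, and then verifying that a high-enough-order approximate solution realizes exactly --- not merely approximately --- the same tropicalization and Hilbert polynomial, so that no initial ideal degenerates and no cell of $\Sigma$ moves. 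The bookkeeping of the approximation order and of the $t=s^{1/Q}$ rescaling should then be routine.
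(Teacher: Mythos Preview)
The paper does not prove this proposition at all: it is quoted as Theorem~1.5 of Katz~\cite{Katz}, and the only accompanying remark is that ``this is discussed in the paragraph following the statement of Theorem~1.5 in~\cite{Katz}.'' So there is no proof in the paper to compare your proposal against; the result is used as a black box.

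As for your sketch on its own terms, the strategy---reduce to $\C((s^{1/Q}))$, encode the tropicalization and Hilbert polynomial via the finitely many initial ideals of the Gr\"obner complex, then invoke Artin approximation to replace formal power-series coefficients by algebraic ones---is a reasonable and essentially standard line of attack for statements of this type. You have correctly identified the delicate point yourself: packaging ``the Gr\"obner complex is exactly $\Sigma$ with exactly these initial ideals'' as a closed polynomial condition to which Artin approximation applies. The Buchberger-identity encoding gives you that the $g_i'$ are a Gr\"obner basis with the prescribed initial forms on each cell of $\Sigma$, but it does not by itself rule out the Gr\"obner complex of $I'$ being a proper \emph{refinement} of $\Sigma$ (i.e., new walls appearing inside a cell). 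One typically handles this either by an openness argument---showing that having a fixed Gr\"obner fan is an open condition on the coefficients, so a sufficiently high-order perturbation cannot create new walls---or by working instead with a universal Gr\"obner basis and tracking all initial ideals simultaneously. Either route can be made rigorous, but your write-up stops just short of it, so as written the proposal is a credible outline rather than a complete proof.
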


This is discussed in the paragraph following the statement of Theorem~1.5 in~\cite{Katz}.

\begin{proof}[Proof of Proposition~$\ref{P:limit}$]
 By Proposition~\ref{P:Katz}, we may assume that $W$ is defined over a finite extension of $\C(s)$, which  
 is the function field, $\C(C)$, of a smooth complex curve $C$.
 The inclusion $\C(s)\subset\C(C)$ induces a dominant rational map $\pi\colon C\,-\to\C$.
 If we let $o\in\pi^{-1}(0)$ and $u$ be a local parameter at $o$, then $u$ generates $\C(C)$ over $\C(s)$.
 
 Replacing $C$ by an affine neighborhood of $o$ if necessary, there is a family of varieties
 $\calW\subset(C\smallsetminus\{o\})\times(\C^\times)^n$ over $C\smallsetminus\{o\}$ such that, when scalars are
 extended to $\K$, gives the variety $W\subset(\K^\times)^n$.
 We claim that 
\[
     \lim_{a\to o} \tfrac{-1}{\log|u(a)|} \scrA(\calW_a)\ =\ \nu(u)\cdot\scrT(W)\,.
\]
 This follows from nearly the same arguments as Proposition~\ref{P:Jonsson}, which are given in Section 4
 of~\cite{J14}. 
 To obtain the statement of the proposition, set $\calV:=e^{-\nu(u)}\calW$.

 Finally, if the original variety $W\subset(\K^\times)^n$ was a complete intersection then so is the variety we
 replace it by in $(\C(C)^\times)^n$ as they have the same Hilbert polynomial.
 Thus there is an open subset $U$ of $C$ such that $\calW$ is flat over $U$, and in fact so that $\calW_a$ is
 a complete intersection for $a\in U$.
\end{proof}

\subsection{Higher convexity}

Let $Y\subset\R^{k+1}$ be a closed set with finitely many connected components such that the pair $(\R^{k+1},Y)$ is
triangulated. 
Set $\defcolor{Y^c}:=\R^{k+1}\smallsetminus Y$.
Let $Y_1,\dotsc,Y_m$ be the bounded connected components of $Y$.
Then there exist open subsets $\beta_1,\dotsc,\beta_m$ of $\R^{k+1}$ with disjoint closures, where $\beta_i$ a
neighborhood of $Y_i$ and $\gamma_i:=\partial \beta_i$, the boundary of $\beta_i$, is a $k$-cycle in
$Y^c$. 

\begin{lemma}\label{L:homology_basis}
 The classes of the cycles $\gamma_1,\dotsc,\gamma_m$ form a basis for $\witi{H}_{k}(Y^c)$.
\end{lemma}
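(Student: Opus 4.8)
The plan is to compute $\witi{H}_k(Y^c)$ directly, exploiting that $Y^c$ is an open subset of the contractible space $\R^{k+1}$ and that $Y$ has only finitely many components, so that everything localizes near the individual $Y_i$.

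Write $n=k{+}1$. Since $\R^n$ is contractible, the long exact sequence of the pair $(\R^n,Y^c)$ collapses to an isomorphism $\partial\colon H_n(\R^n,Y^c)\xrightarrow{\ \sim\ }\witi{H}_k(Y^c)$. Using the triangulation, choose an open regular neighborhood $N\supseteq Y$ which is a disjoint union $N=N_1\sqcup\dotsb\sqcup N_m\sqcup N_\infty$, where $\overline{N_i}$ is a compact regular neighborhood of the bounded component $Y_i$ (the $\overline{N_i}$ pairwise disjoint and disjoint from the other components of $Y$) and $N_\infty$ is a regular neighborhood of $Y_\infty$, the union of the unbounded components of $Y$. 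Since $\R^n\smallsetminus N$ is closed and contained in $Y^c$, excision together with the decomposition of $N$ gives
\[
  H_n(\R^n,Y^c)\ \cong\ H_n(N,N\smallsetminus Y)\ \cong\
  \Bigl(\bigoplus_{i=1}^m H_n(N_i,N_i\smallsetminus Y_i)\Bigr)\ \oplus\ H_n(N_\infty,N_\infty\smallsetminus Y_\infty)\,.
\]
For a bounded component $Y_i$, the punctured neighborhood $N_i\smallsetminus Y_i$ deformation retracts onto the frontier $\partial\overline{N_i}$, so $H_n(N_i,N_i\smallsetminus Y_i)\cong H_n(\overline{N_i},\partial\overline{N_i})$; since $\overline{N_i}$ is a compact, connected, oriented $n$-manifold with boundary, Lefschetz duality gives $H_n(\overline{N_i},\partial\overline{N_i})\cong H^0(\overline{N_i})\cong\Z$, generated by the relative fundamental class, whose image under $\partial$ is the fundamental class $[\partial\overline{N_i}]$. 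Running the same computation over each unbounded component $Y_j$ yields $H_n(\overline{N_j},\partial\overline{N_j})\cong H^0_c(\overline{N_j})=0$, because $\overline{N_j}$ is connected and non-compact; hence $H_n(N_\infty,N_\infty\smallsetminus Y_\infty)=0$. Thus $\witi{H}_k(Y^c)$ is free abelian of rank $m$, with basis the images in $Y^c$ of the classes $[\partial\overline{N_i}]$.

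It remains to recognize these basis elements as the $[\gamma_i]$. We may take $\beta_i:=\inte\overline{N_i}$, so that $\gamma_i=\partial\beta_i=\partial\overline{N_i}$ and the hypotheses of the lemma hold; any other admissible choice of $\beta_i$ yields a cycle homologous to $\partial\overline{N_i}$ in $Y^c$, since the closed region between the two boundaries is a chain in $Y^c$, so the class $[\gamma_i]$ is unchanged. By the previous paragraph $[\gamma_1],\dotsc,[\gamma_m]$ is then the image of a basis under an isomorphism, hence a basis of $\witi{H}_k(Y^c)$.

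The routine but genuinely necessary input is PL regular neighborhood theory: that a regular neighborhood $\overline{N_i}$ of the polyhedron $Y_i$ is a compact PL manifold with boundary, that $N_i\smallsetminus Y_i$ collapses onto its frontier, and that this frontier carries the boundary fundamental class compatibly with the excision isomorphism. These facts are standard, and they are the only place where care is needed. One can sidestep them by instead one-point compactifying to $S^n$ and applying Alexander duality to $\widehat Y:=Y\cup\{\infty\}$ --- which has exactly $m{+}1$ components, since $\infty$ lies in the closure of every unbounded component of $Y$ while the bounded components stay separate --- to get $\witi{H}_k(Y^c)=\witi{H}_k(S^n\smallsetminus\widehat Y)\cong\witi{H}^0(\widehat Y)\cong\Z^m$, and then detect the classes $[\gamma_i]$ by their intersection (linking) numbers against properly embedded rays running from $Y_i$ to infinity and meeting $Y$ only along $Y_i$.
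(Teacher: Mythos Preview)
The paper does not give a proof of this lemma; it is stated and immediately followed by the remark on $\Z$-coefficients, so the authors evidently treat it as standard. Your argument is correct and supplies what is omitted. The excision plus Lefschetz-duality computation makes transparent why only the bounded components of $Y$ contribute---compactly supported $H^0$ vanishes on the non-compact regular neighborhoods of the unbounded pieces---and the Alexander-duality variant you sketch at the end is, if anything, shorter, trading the PL regular-neighborhood input for the single observation that all unbounded components of $Y$ coalesce at $\infty$ in $S^{k+1}$.

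One minor quibble: in the last paragraph, ``the closed region between the two boundaries is a chain in $Y^c$'' is a bit loose, since two admissible choices of $\beta_i$ need not be nested. It is cleaner to first shrink the regular neighborhood so that $\overline{N_i}\subset\beta_i$, giving a chain $\overline{\beta_i}-\overline{N_i}$ supported in $Y^c$ with boundary $\gamma_i-\partial\overline{N_i}$; or simply to invoke the linking pairing $\mathrm{lk}(\gamma_i,p_j)=\delta_{ij}$ for points $p_j\in Y_j$, which pins down $[\gamma_i]$ independently of the choice of $\beta_i$.
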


We remark that we take coefficients in $\Z$.
Fixing an orientation for $\R^{k+1}$ orients each $\beta_i$ and gives each $\gamma_i=\partial\beta_i$ the
outward orientation. 
A nonzero homology class $\zeta\in\witi{H}_k(Y^c)$ is \demph{positive} if it is a nonnegative
integer combination of the classes $[\gamma_i]$.

A cycle class $[\gamma]\in\witi{H}_k(Y^c)$ vanishes in $\witi{H}_k(\R^{k+1})$, and so it is
the boundary of a $(k{+}1)$-chain $Z$ in $\R^{k+1}$.
Then $[\gamma]\in\witi{H}_k(Y^c)$ is non-zero if and only if 
for every $(k{+}1)$-chain $Z$ with $\partial Z=\gamma$, we have $Z\cap Y\neq\emptyset$.

Gromov~\cite[\S$\tfrac{1}{2}$]{Gromov} gave a homological generalization of convexity.
An open set $X\subset\R^n$ is \demph{$k$-convex} if for any affine linear space $L$ of dimension $k{+}1$, the
natural map 
\[
   \iota_k\ \colon\ \witi{H}_k(L\cap X)\ \longrightarrow\ 
     \witi{H}_k(X)
\]
is an injection.
Note that $0$-convex and connected is the ordinary notion of convex.
Henriques~\cite{He04} considered bu did not define a weak version of this notion:
The set $X$ is \demph{weakly $k$-convex} if the map $\iota_k$ does not send any positive cycle to zero.
This is independent of choices,  changing the orientation of $L$ replaces each positive
cycle $\gamma$ by $-\gamma$.

Let $Y$ be a polyhedral complex of codimension $k{+}1$ in $\R^n$ consisting of finitely many polyhedra.
An affine subspace $L$ in $\R^n$ of dimension $k{+}1$ is \demph{transverse to $Y$} if $L$ meets each polyhedron
$\sigma$ of $Y$ transversally.
That is, if $L\cap\sigma\neq\emptyset$, then $\sigma$ has dimension $n{-}k{-}1$ and $L$ meets it transversally,
necessarily in a single point in the relative interior of $\sigma$.

Henriques proved a moving lemma~\cite[Lemma~3.6]{He04}, which implies that it suffices to take the affine
space $L$ in the definition of $k$-convexity to be a translate of some rational affine space---one of the
form $M\otimes_\Q\R$, for $M$ an affine subspace of $\Q^n$ of dimension $k{+}1$.
The same proof shows that it suffices to take $L$ to lie in a dense subset of such subspaces.

\section{Combinatorial tropical varieties}\label{S:two}

A ``combinatorial tropical variety'' of dimension $r$ is a polyhedral complex in $\R^n$ of pure dimension
$r$ in $\R^n$ that has some of the properties of a nonarchimedean amoeba.
Mikhalkin and Rau~\cite{MR} call these tropical cycles.
Consequently, a result about some type of combinatorial tropical variety implies the same result for nonarchimedean
amoebas.
Let $\calA\subset\R^n$ be a finite set of points and $c\in\R^\calA$ a vector whose components $c_a$ are
indexed by elements $a$ of $\calA$.
These define a piecewise linear function on $\R^n$,
 \begin{equation}\label{Eq:ATV}
  \defcolor{T(\calA,c)}\ :=\  
    x\ \longmapsto\ \min\{c_a+w\cdot a\mid a\in\calA\}\,.
 \end{equation}
Its graph \defcolor{$\Gamma(\calA,c)$} is a polyhedral complex of dimension $n$ whose facets lie over the domains
of linearity of $T(\calA,c)$. 
Its ridge set is the union of faces of dimension at most $n{-}1$, which lies over the set  
\defcolor{$\scrT(\calA,c)$} where $T(\calA,c)$ is not differentiable.
This set $\scrT(\calA,c)$ is a \demph{combinatorial tropical hypersurface} and it consists of
the points $x\in\R^n$ where the minimum in~\eqref{Eq:ATV} is attained at least twice.
The following is elementary and not original.

\begin{proposition}\label{P:trivial}
 The set $\scrT(\calA,c)$ is a polyhedral complex of pure dimension $n{-}1$ whose complement is $0$-convex. 
\end{proposition}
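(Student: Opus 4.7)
The plan is to treat $T(\calA,c)$ as the minimum of the finitely many affine linear functions $x \mapsto c_a + x\cdot a$, which makes it a concave piecewise linear function on $\R^n$. I will analyze the associated coarsest polyhedral subdivision on which $T(\calA,c)$ is linear. For each $a \in \calA$, set
\[
  \sigma_a\ :=\ \{x\in\R^n : c_a + x\cdot a \leq c_b + x\cdot b\ \text{for all}\ b\in\calA\},
\]
a closed convex polyhedron. A continuity argument shows that if $\sigma_a$ is not $n$-dimensional then at every point of $\sigma_a$ some other $b\in\calA$ also achieves the minimum (otherwise $a$ would strictly minimize on an open neighborhood). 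Hence discarding indices with $\dim\sigma_a<n$ changes neither $T(\calA,c)$ nor the set $\scrT(\calA,c)$, and I may assume each nonempty $\sigma_a$ has dimension~$n$.

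Next I will observe that the collection of the $\sigma_a$ together with all of their faces forms a polyhedral subdivision $\Sigma$ of $\R^n$: for any two indices, $\sigma_a\cap\sigma_b$ is defined by adding the equality $c_a+x\cdot a = c_b + x\cdot b$ and is a face of each. By construction, $\scrT(\calA,c)$ is precisely the $(n{-}1)$-skeleton of $\Sigma$, since on $\inte(\sigma_a)$ the minimum is attained uniquely while on $\partial\sigma_a$ at least two linear pieces agree. To get purity of dimension $n{-}1$, I will use that the face lattice of each $n$-dimensional polyhedron $\sigma_a$ is graded: every proper face of $\sigma_a$ is contained in some facet (codimension-one face) of $\sigma_a$, and all such facets belong to $\scrT(\calA,c)$.

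To prove $0$-convexity of $X := \R^n\smallsetminus\scrT(\calA,c)$, I will identify its connected components with the open convex sets $\inte(\sigma_a)$, which are pairwise disjoint. Given an affine line $L$, the map
$\iota_0 \colon \widetilde{H}_0(L\cap X) \to \widetilde{H}_0(X)$ sends the class of a connected component of $L\cap X$ to the class of the component of $X$ containing it. For injectivity: if two distinct components of $L\cap X$ lay in the same $\inte(\sigma_a)$, then convexity of $\sigma_a$ would force the segment between them to lie in $\inte(\sigma_a)\subset X$, contradicting that they are different components of $L\cap X$. Hence distinct components of $L\cap X$ map to distinct components of $X$, which yields injectivity on reduced $H_0$.

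The proposition is labeled elementary, and indeed no single step is deep. The most delicate points, which I would be careful to spell out, are (i) the reduction to full-dimensional $\sigma_a$ via the continuity argument and (ii) deriving purity of codimension one from the graded face lattice of each $\sigma_a$; both are standard facts about polyhedral subdivisions induced by concave piecewise linear functions.
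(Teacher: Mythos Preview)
Your proof is correct and follows essentially the same approach as the paper's: both identify the domains of linearity (your $\sigma_a$, the paper's facets of the graph $\Gamma(\calA,c)$) as the convex pieces whose interiors are the components of the complement, and both identify $\scrT(\calA,c)$ as the codimension-one skeleton of the resulting subdivision. The paper phrases this in two sentences via the graph in $\R^{n+1}$ and its ridge set, whereas you work directly in $\R^n$ and spell out the reduction to full-dimensional $\sigma_a$, the polyhedral-complex structure, purity, and the injectivity of $\iota_0$; these are exactly the details underlying the paper's terse argument.
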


\begin{proof}
 The projection of $\Gamma(\calA,c)$ to $\R^n$ is a piecewise linear homeomorphism.
 The first statement follows as its ridge set has pure dimension $n{-}1$ and the second as the components of
 the complement of the ridge set consists of the interiors of the facets of $\Gamma(\calA,c)$.
\end{proof}

When $\calA\subset\Z^n$, the set $\scrT(A,c)$ is the tropicalization of the hypersurface
\[
   \{  x\in(\K^\times)^n \,\mid\, 0 = \sum_{a\in\calA} s^{c_a}x^a\ \}\,,
\]
where $\K$ is a valued field with value group $\R$ and $s\in\K$ has $\nu(s)=1$.

By the fundamental theorem of tropical geometry~\cite{SS}, the nonarchimedean amoeba
$\scrC$ of a curve $C$ in $(\K^\times)^n$ admits the structure of a finite balanced rational polyhedral complex in
$\R^n$ of pure 
dimension one. 
Putting such a structure on $\scrC$, we have that $\scrC$ consists of finitely many vertices and edges,
with each edge an interval (possibly unbounded) of a line.
Furthermore, each edge $e$ is equipped with a positive integral wight $\wt_e$ and is parallel to a vector
$v_e\in\Z^n$. 
We assume that $v_e$ is primitive in that its components are relatively prime.
There are exactly two primitive vectors, $v_e$ and $-v_e$, that are parallel to $e$, one for each
direction along $e$.
Finally, balanced means that for every point $p\in\scrC$, we have
 \begin{equation}\label{Eq:balanced}
   0\ =\ \sum_{e} \wt_e v_e\,,
 \end{equation}
the sum over all edges $e$ incident on $p$ where $v_e$ points away from $p$.
This sum~\eqref{Eq:balanced} is nonempty as $\scrC$ is pure and therefore has no isolated points. 

\begin{lemma}\label{L:weaklyBalanced}
 Suppose that $w\in\R^n$ and that $p\in\scrC$.
 Then either we have that $w\cdot v_e=0$ for all edges $e$ incident on $p$, or else there are two edges
 $e,f$ incident on $p$ with 
\[
   w\cdot v_e\ <\ 0\ <\ w\cdot v_f\,.
\]
\end{lemma}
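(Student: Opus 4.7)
The plan is to derive the conclusion directly from the balancing condition~\eqref{Eq:balanced} by pairing it with $w$. At the point $p$, for each edge $e$ incident on $p$, fix the primitive vector $v_e$ to point away from $p$, as in the convention preceding~\eqref{Eq:balanced}. Let $\alpha_e>0$ denote the associated weights. The balancing condition asserts
\[
   0 \ =\ \sum_{e\ni p} \alpha_e v_e\,,
\]
where the sum is over all edges $e$ incident on $p$; as noted after~\eqref{Eq:balanced}, this sum is nonempty since $\scrC$ is pure of dimension one and has no isolated points.

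Next I would take the Euclidean inner product of this identity with the vector $w\in\R^n$. This yields
\[
   0 \ =\ \sum_{e\ni p} \alpha_e\,(w\cdot v_e)\,.
\]
Now the argument is purely a statement about real numbers: a vanishing sum whose coefficients $\alpha_e$ are strictly positive must have either all summands equal to zero, or at least one strictly positive summand and at least one strictly negative summand. In the first case $w\cdot v_e=0$ for every edge $e$ incident on $p$, which is the first alternative of the lemma. In the second case there exist edges $e,f$ incident on $p$ with $w\cdot v_e<0<w\cdot v_f$, which is the second alternative.

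The main obstacle is essentially bookkeeping rather than conceptual: one must be careful that the $v_e$'s appearing in the conclusion refer to the same choice of primitive direction (pointing away from $p$) as the one used to state the balancing condition. Since the conclusion is invariant under replacing $v_e$ by $-v_e$ (it only asserts existence of two edges where $w\cdot v_e$ takes opposite signs), this sign convention is harmless, but it is worth flagging explicitly in the write-up.
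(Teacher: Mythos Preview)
Your proof is correct and follows exactly the approach of the paper, which simply remarks that the lemma follows from the balancing condition~\eqref{Eq:balanced} since each weight $\alpha_e$ is positive. Your write-up merely spells out this one-line observation in more detail.
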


\begin{proof}
 This follows from~\eqref{Eq:balanced} as each weight $\wt_e$ is positive.
\end{proof}

\begin{definition}
 A locally finite polyhedral complex $\scrC$ in $\R^n$ of pure dimension one is 
 \demph{weakly balanced} if Lemma~\ref{L:weaklyBalanced} holds for $\scrC$.
 That is, if for all $w\in\R^n$ and $p\in\scrC$, 
 either $w\cdot v_e=0$ for all edges $e$ incident on $p$ or there are two edges
 $e,f$ incident on $p$ with 
\[
   w\cdot v_e\ <\ 0\ <\ w\cdot v_f\,.
\]
 Here, $v_e$ and $v_f$ are any vectors pointing away from $p$ that are parallel to $e$ and $f$, respectively.
\end{definition}

Weakly balanced graphs admit unbounded paths in nearly every direction.

\begin{lemma}\label{L:unboundedPath}
 Let $\scrC$ be a weakly balanced graph and $w\in\R^n$ be nonzero.
 Then, for every point $p$ of $\scrC$ which has an incident edge $e$ with $w\cdot v_e\neq 0$, 
 there is a continuous path $\gamma\colon [0,\infty)\to\scrC$ with $\gamma(0)=p$ where 
 $w\cdot\gamma(t)$ is unbounded and strictly increasing for $t\in[0,\infty)$, and we may assume that $\gamma$
 contains an edge $f$ incident to $p$ with $w\cdot v_f>0$.
\end{lemma}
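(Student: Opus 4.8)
The plan is to build the path $\gamma$ inductively as a sequence of edges, using weak balancedness at each vertex to continue in a direction along which $w$ strictly increases. First I would start at $p$: by hypothesis there is an incident edge $e$ with $w\cdot v_e\neq 0$, so by the definition of weakly balanced there is an incident edge $f$ with $w\cdot v_f>0$. Traverse $f$ starting at $p$; along this edge the linear function $w\cdot x$ is strictly increasing. If $f$ is unbounded we are done, taking $\gamma$ to be a parametrization of $f$ by $w\cdot\gamma(t)$ (after an affine reparametrization so the domain is $[0,\infty)$). If $f$ is bounded, it terminates at another vertex $q$ of $\scrC$ with $w\cdot q>w\cdot p$, and $f$ is an edge incident on $q$ with $w\cdot(-v_f)<0$ (here $-v_f$ points away from $q$ along $f$). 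Weak balancedness at $q$ then forces the existence of an edge $f'$ incident on $q$ with $w\cdot v_{f'}>0$, and we repeat.

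The key issue is to make this inductive process yield an honest continuous path defined on all of $[0,\infty)$ with $w\cdot\gamma(t)$ strictly increasing and unbounded, rather than an infinite concatenation that only covers a bounded interval of time or stalls. The natural device is to parametrize $\gamma$ directly by the value of the linear functional: set $t\mapsto\gamma(t)$ so that $w\cdot\gamma(t)=w\cdot p+t$. On each successive edge $f_i$ (from vertex $q_{i-1}$ to $q_i$, with $q_0=p$), the function $w\cdot x$ increases by the amount $w\cdot(q_i-q_{i-1})>0$, so $\gamma$ is defined on $[0,\ T_N)$ where $T_N=w\cdot q_N-w\cdot p$. Since $\scrC$ is locally finite and each step strictly increases $w\cdot x$, no vertex is visited twice and the partial sums $T_N$ are strictly increasing; I need them to tend to $\infty$. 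If they did not, the vertices $q_N$ would accumulate at some point with $w\cdot x$-value $w\cdot p+\sup T_N$, contradicting local finiteness of $\scrC$ — unless the process terminates at a finite stage, which can only happen if some $f_i$ is an unbounded edge, in which case $\gamma$ continues to infinity along that edge and we are done. So in all cases $w\cdot\gamma(t)$ is unbounded, and it is strictly increasing by construction; continuity is clear since $\gamma$ is piecewise linear and the pieces match at the vertices.

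The main obstacle is the termination/accumulation dichotomy: ruling out the scenario where the path consists of infinitely many bounded edges whose $w$-lengths sum to a finite value. I would handle this exactly as above, invoking local finiteness of $\scrC$ to bound below the $w\cdot x$-increment of any edge that lies in a fixed compact region — but this requires a small argument, since edges can be arbitrarily short in Euclidean length even while their endpoints are distinct; the point is that the increment $w\cdot(q_i-q_{i-1})$ is a positive number, and if infinitely many of them occurred with the vertices $q_i$ confined to the compact slab $\{x: w\cdot p\le w\cdot x\le w\cdot p+T\}\cap(\text{a bounded neighborhood})$, local finiteness would be violated. To close this cleanly one should note that the path, having $w\cdot\gamma$ monotone, cannot revisit a vertex, so the $q_i$ are distinct; if they stayed in a compact set they would accumulate, contradicting local finiteness of the vertex set of $\scrC$. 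Finally, the requirement that $\gamma$ contain an edge $f$ incident to $p$ with $w\cdot v_f>0$ is met by the very first step $f_1$, so no extra work is needed there.
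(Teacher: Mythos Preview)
Your approach is essentially the paper's: greedily extend along edges with $w\cdot v_e>0$, using weak balancedness at each new vertex to continue. The paper packages this as a downward induction on $w\cdot p$ (assuming the result for all vertices $q$ with $w\cdot q>w\cdot p$ and then stepping from $p$ along one edge to such a $q$), while you build the path iteratively and parametrize by $w$-height; these are the same argument in different clothing.

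There is, however, a gap in your termination analysis. You correctly isolate the danger case---infinitely many bounded edges whose $w$-increments sum to a finite value $T$---and you assert that the vertices $q_i$ would then accumulate, contradicting local finiteness. But you never show the $q_i$ are confined to a compact set: the slab $\{x:w\cdot p\le w\cdot x\le w\cdot p+T\}$ is unbounded in directions orthogonal to $w$, and nothing you have said prevents the $q_i$ from marching off to infinity perpendicular to $w$ while their $w$-heights converge. Your parenthetical ``$\cap(\text{a bounded neighborhood})$'' is precisely where the missing argument lives, and you supply no reason for that intersection. For a \emph{finite} complex---in particular for any tropical curve, which is the paper's actual application---the issue evaporates: the $q_i$ are distinct (since $w\cdot q_i$ is strictly increasing) and there are only finitely many vertices, so the process must reach an unbounded ray after finitely many steps. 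The paper's own induction tacitly leans on the same finiteness to be well-founded. So your proof is complete in the case that matters, but the step you flag as ``a small argument'' does not close as written for arbitrary locally finite $\scrC$.
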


In particular, every component of a weakly balanced graph is unbounded.

\begin{proof}
 Let us assume that if $w\cdot q>w\cdot p$ and $q$ is a vertex of $\scrC$ having an incident edge $e$ with
 $w\cdot v_e\neq 0$, then there is a continuous function $\gamma_1\colon[0,\infty)\to\scrC$ with $\gamma(0)=q$
 such that $w\cdot\gamma_1(t)$ is an increasing unbounded function. 
 The base case of the induction are those vertices $p$ of $\scrC$ with $w\cdot p$ maximal, which is
 covered later in this paragraph as edges $e$ emanating from $p$ with $w\cdot v_e>0$ are unbounded.
 Let $p\in\scrC$ be a point with an incident edge $f$ such that $w\cdot v_f\neq 0$.
 Since $\scrC$ is weakly balanced, there is an edge $e$ incident to $p$ with $w\cdot v_e>0$.
 If $e$ is unbounded in the direction of $v_e$, set $\gamma(t):=p+t v_e$, which gives the desired path.

 Otherwise, let $q$ be the vertex incident to $e$ in the direction of $v_e$.
 Then $w\cdot q>w\cdot p$ and the direction of $e$ at $q$ is $-v_e$.  
 Since $w\cdot(-v_e)\neq 0$, the induction hypothesis holds. 
 Let $\gamma_1\colon[0,\infty)\to\scrC$ be a path with $\gamma_1(0)=q$ and $w\cdot \gamma_1(t)$
 increasing and unbounded.
 Suppose that $p+t_0v_e=q$.  
 Then $t_0>0$ and we define $\gamma(t)$ by $\gamma(t):=p+t v_e$ for $0\leq t\leq t_0$ and 
 $\gamma(t)=\gamma_1(t-t_0)$ for $t\geq t_0$.  
 Then $\gamma$ is the desired increasing path.
\end{proof}

\begin{theorem}
 The complement of a weakly balanced graph in $\R^n$ is $(n{-}2)$-convex.
\end{theorem}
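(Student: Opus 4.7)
The plan is to prove injectivity of $\iota_{n-2}\colon \witi H_{n-2}(L\cap X)\to\witi H_{n-2}(X)$, where $X=\R^n\smallsetminus\scrC$, by a linking-number argument in the one-point compactification $S^n=\R^n\cup\{\infty\}$. By Henriques's moving lemma it suffices to consider affine $(n{-}1)$-planes $L$ in a dense set of rational directions, which may be taken transverse to $\scrC$; then $L\cap\scrC=\{p_1,\dots,p_m\}$ is a finite set of interior points of edges of $\scrC$, and by Lemma~\ref{L:homology_basis} the group $\witi H_{n-2}(L\cap X)$ is free abelian on the classes $[S_1],\dots,[S_m]$ of small oriented $(n{-}2)$-spheres in $L$ about the $p_i$. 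Suppose $\gamma=\sum_i a_i[S_i]$ lies in the kernel of $\iota_{n-2}$, so that $\gamma=\partial Z$ for some $(n{-}1)$-chain $Z\subset X$; the goal is to force each $a_i=0$.

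The key construction will be, for each $i$, an embedded 1-cycle $\overline{P_i}$ in $S^n$ through $p_i$ and $\infty$ that meets $L$ only at $p_i$. Let $w$ be a unit normal to $L$. Since $L$ is transverse to the edge $e_i$ through $p_i$, its direction vector $v_{e_i}$ satisfies $w\cdot v_{e_i}\neq 0$, and Lemma~\ref{L:unboundedPath} applied at $p_i$ once with $w$ and once with $-w$ yields two embedded unbounded paths in $\scrC$ starting at $p_i$ along which $w\cdot x$ is strictly monotone and unbounded in opposite senses. Their concatenation $P_i$ is an embedded path through $p_i$ with $w\cdot P_i$ running strictly from $-\infty$ to $+\infty$; in particular $P_i\cap L=\{p_i\}$ and $|P_i(t)|\to\infty$ at both ends. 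Therefore the closure $\overline{P_i}:=P_i\cup\{\infty\}$ is an embedded 1-cycle in $S^n$, disjoint from $\gamma$ (which is supported on small spheres centered at the various $p_j$).

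To conclude, I will evaluate the linking number $\mathrm{lk}(\gamma,\overline{P_i})$ in $S^n$ in two ways. Using the cobounding chain $Z\subset X$, one has $\mathrm{lk}(\gamma,\overline{P_i})=Z\cdot\overline{P_i}=0$, since $Z$ lies in $\R^n\smallsetminus\scrC$ while $\overline{P_i}\subset\scrC\cup\{\infty\}$. Alternatively, each $S_j$ cobounds a small $(n{-}1)$-disk $D_j\subset L$ centered at $p_j$, and since $\overline{P_i}\cap L=\{p_i\}$, the intersection $D_j\cdot\overline{P_i}$ equals $\pm 1$ when $j=i$ and vanishes otherwise; thus $\mathrm{lk}(\gamma,\overline{P_i})=\pm a_i$. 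Comparing the two computations gives $a_i=0$ for every $i$, hence $\gamma=0$ and $\iota_{n-2}$ is injective.

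The main technical hurdle is the construction of the paths $P_i$ that cross $L$ only at $p_i$ and close up to 1-cycles at $\infty$; this is precisely what the weak balancing hypothesis provides via Lemma~\ref{L:unboundedPath}. The remaining pieces---Henriques's moving lemma, the homology basis of Lemma~\ref{L:homology_basis}, and linking-number duality in $S^n$---are all standard.
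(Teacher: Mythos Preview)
Your argument is correct and genuinely different from the paper's.  Both proofs hinge on the same consequence of weak balancing, Lemma~\ref{L:unboundedPath}, but they deploy it differently.  The paper works with an arbitrary hyperplane $L$ and the bounded components $\scrC_i$ of $L\cap\scrC$: given $Z\subset\scrC^c$ with $\partial Z=\gamma$, it caps $Z$ with the weighted disks $-\sum b_i\beta_i$ to get a closed $(n{-}1)$-chain in $\R^n$, bounds that by an $n$-chain $D$, and then runs a single unbounded monotone path in $\scrC$ from a point of some $\beta_i$ out of $D$; since the path cannot recross $L$, it must exit through $Z$, contradicting $Z\subset\scrC^c$.  You instead pass to $S^n$, reduce via the moving lemma to $L$ transverse (so $L\cap\scrC$ is discrete), and for each intersection point $p_i$ build a $1$-cycle $\overline{P_i}\subset\scrC\cup\{\infty\}$ by gluing two monotone unbounded paths (one for $w$, one for $-w$), then read off each coefficient $a_i$ as a linking number computed two ways.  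Your approach is cleaner in that it extracts each $a_i$ directly rather than by contradiction, and the linking-number formalism makes the duality between $\scrC$ and the bounding chain $Z$ explicit; the paper's approach stays entirely in $\R^n$, needs only one application of Lemma~\ref{L:unboundedPath} (not two), and handles non-transverse $L$ without invoking the moving lemma.  One small remark: you assert $L\cap\scrC$ is finite, but a locally finite graph may meet a transverse hyperplane in infinitely many points; this is harmless, since any class $\zeta$ is a finite combination of the $[S_i]$ and your linking computation only involves those finitely many $\overline{P_i}$.
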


\begin{proof}
 We must show that for any hyperplane $L$ that meets $\scrC$, the map
\[
   \iota_{n-2}\ \colon\ 
    \witi{H}_{n-2}(L\cap\scrC^c)\ \longrightarrow\ \witi{H}_{n-2}(\scrC^c)
\]
 is injective.
  
 The reduced homology group $\witi{H}_{n-2}(L\cap\scrC^c)$ is free with one generator for each
 bounded connected component of $L\cap\scrC$.
 We assume that $\scrC\not\subset L$, for otherwise there are no bounded connected components of $L\cap\scrC$ and
 $\witi{H}_{n-2}(L\cap\scrC^c)$  vanishes.

 By Lemma~\ref{L:homology_basis}, if $\scrC_1,\dotsc,\scrC_m$ are the bounded connected components of
 $L\cap\scrC$, then there are open subsets $\beta_1,\dotsc,\beta_m$ with disjoint closures and
 $\beta_i\supset\scrC_i$ such that if $\gamma_i:=\partial\beta_i$ for $i=1,\dotsc,m$, then the cycle classes
 $[\gamma_i]$ are a basis for $\witi{H}_{n-2}(L\cap\scrC^c)$.

 Suppose that $\zeta\in\witi{H}_{n-2}(L\cap\scrC^c)$ with 
 $\iota_{n-2}(\zeta)=0$ in  $\witi{H}_{n-2}(\scrC^c)$.
 There are unique integers $b_1,\dotsc,b_m$ such that $\zeta=[\gamma]$ where $\gamma$ is the cycle
\[
    \gamma\ =\ \sum_{i=1}^m b_i \,\gamma_i\,.
\]
 Since $\iota_{n-2}(\zeta)=0$, there is a $(n{-}1)$-chain \defcolor{$Z$} in $\scrC^c$ with 
 $\partial Z=\gamma$.
 If we set 
\[
   \beta\ :=\  -\sum_{i=1}^m b_i \beta_i\,,
\]
 then $Z+\beta$ is closed in $\R^n$, and hence bounds an $n$-chain $D$ in $\scrC^c$.

 Suppose that $\zeta\neq 0$, so that some coefficient $b_i$ is nonzero.
 Then $-b_i\gamma_i$ forms part of the boundary of $D$, and we conclude that for every point $p$ of
 the interior of $\beta_i$, $D$ contains a neighborhood of $p$ in one of the halfspaces defined by the
 hyperplane $L$.

 Since $\beta_i\cap\scrC$ is nonempty and contained in the interior of $\beta_i$, and $\scrC\not\subset L$,
 there is a point $p$ of $\beta_i\cap\scrC$ lying in the closure of $\scrC\smallsetminus(L\cap\scrC)$.
 If $w$ is a vector normal to $L$, then there is an edge $e$ of $\scrC$ incident to $p$ with $w\cdot v_e\neq 0$.
 Possibly replacing $e$ by another edge incident to $p$ and $w$ by $-w$, we may assume that $w\cdot v_e>0$ and
 that $e$ meets the interior of $D$.
 By Lemma~\ref{L:unboundedPath}, there is a path $\gamma\colon[0,\infty)\to\scrC$ with $\gamma(0)=p$,
 $w\cdot\gamma(t)$ an unbounded and increasing function of $t$, and whose image contains the edge $e$.
 Observe that $L$ is the set of points $x$ with $w\cdot x=w\cdot p$.

 By our choice of $e$ and $w$, there is some $t_0>0$ such that $\gamma(t_0)$ lies in the interior of the
 $n$-chain $D$.
 Since $w\cdot\gamma(t)$ is unbounded, but $D$ is bounded, there is a point $t_1>t_0$ with $\gamma(t_1)$ lying
 on the boundary $Z +\beta$ of $D$.
 As $w\cdot\gamma(t_1)>w\cdot\gamma(0)$ and $\gamma(0)=p$, we see that $\gamma(t_1)\not\in L$, and therefore
 is a point of $Z$.
 But this implies that $\scrC\cap Z\neq \emptyset$, contradicting that $Z$ is a chain in
 $\scrC^c$.
 We conclude that $\zeta=0$, which implies that $\iota_{n-2}$ is injective.
\end{proof}

\section{Complex nonarchimedean amoebas}\label{S:three}

Let $\K$ be the field of complex Puiseux series in the variable $s$ with $\nu(s)=1$.
We use Jonsson's Theorem to study the complement of the
nonarchimedean amoeba \defcolor{$\scrT$} of a variety $V$ in $(\K^\times)^n$.
By Proposition~\ref{P:limit}, there is a smooth curve $C$, a point $o\in C$, a local parameter $u$ at $o$, and a
family $\calV\subset(C\smallsetminus\{o\})\times(\C^\times)^n$ with
\[
   \lim_{a\to o} \frac{-1}{\log|u(a)|} \scrA(\calV_a)\ =\ \scrT\,.
\]
If $V$ has codimension $r$, then $\calV$ and its fibers have codimension $r$, and if in addition $V$ is a complete
intersection, then so are the fibers $\calV_a$ of $\calV$.

Let \defcolor{$a(t)$} for $t$ positive and near $0$ be the analytic arc defined by $u(a(t))=t$.
If we set 
 \begin{equation}\label{Eq:scaled_amoebas}
   \defcolor{\scrA_t}\ :=\  \frac{-1}{\log t} \scrA(\calV_{a(t)})\,,
 \end{equation}
then we have 
 \[
   \lim_{t\to 0} \scrA_t\ =\ \scrT\,.
 \]
In particular, for every point $x\in\scrT$ and every $t$ sufficiently small, there is a point of $\scrA_t$ close to
$x$.  
We need more, that these points of $\scrA_t$ lie in a given affine $(k{+}1)$-plane
through $x$.
The following technical lemma, whose proof we defer, guarantees this.
It uses the weak higher convexity of the scaled amoebas $\scrA_t$.

\begin{lemma}\label{L:technical}
 Suppose that $L$ is an affine $(k{+}1)$-plane that is transverse to $\scrT$.
 For every $\epsilon>0$, there is a $\delta>0$ such that for every $x\in L\cap\scrT$ and $0<t<\delta$ we have
\[
   d(x\,,\, L\cap \scrA_t)\ <\ \epsilon\,.
\]
\end{lemma}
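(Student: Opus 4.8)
The plan is to combine Jonsson's limit theorem (in the form of Proposition~\ref{P:limit}, giving $\lim_{t\to0}\scrA_t=\scrT$) with the weak higher convexity of the archimedean amoeba complements, which holds for the scaled amoebas $\scrA_t$ by Henriques's theorem (or, when $\calV_{a(t)}$ is a complete intersection, by Bushueva--Tsikh). The point is that mere Kuratowski convergence $\scrA_t\to\scrT$ only supplies points of $\scrA_t$ near each $x\in\scrT$, with no control on their location within $L$; to force such a point into the $(k{+}1)$-plane $L$ we will use a topological linking argument inside $L$ itself.

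First I would fix $x\in L\cap\scrT$ and, using transversality of $L$ to $\scrT$, choose a small ball $B=B(x,\epsilon)$ in $L$ of dimension $k{+}1$ whose boundary $(k)$-sphere $\gamma=\partial B$ is disjoint from $\scrT$ and, moreover, such that $B\cap\scrT$ is a single point. By Lemma~\ref{L:compacta} applied to the compact set $\gamma$ (which is disjoint from $\scrT$), there is $\delta>0$ so that $\gamma\cap\scrA_t=\emptyset$ for all $0<t<\delta$; thus for such $t$ the sphere $\gamma$ is a cycle in $L\cap\scrA_t^c$, i.e. it defines a class in $\witi H_k(L\cap\scrA_t^c)$. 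Orienting $\gamma$ as the boundary of $B$, this class is a \emph{positive} generator in the sense of Lemma~\ref{L:homology_basis} (it encloses a bounded component of $L\cap\scrA_t$ precisely when $B\cap\scrA_t\neq\emptyset$). Next I would show $[\gamma]$ is nonzero in $\witi H_k(\scrA_t^c)$: if it were zero, then since $\scrA_t$ is a (scaled) amoeba of a variety of codimension $k{+}1$, weak $k$-convexity forces the positive cycle $[\gamma]$ not to die in $\witi H_k(\scrA_t^c)$ — this is exactly Henriques's weak convexity result applied along the $(k{+}1)$-plane $L$. Equivalently: every $(k{+}1)$-chain in $\R^n$ bounded by $\gamma$ must meet $\scrA_t$, and in particular the ball $B$ itself does, so $B\cap\scrA_t\neq\emptyset$, giving a point of $L\cap\scrA_t$ within distance $\epsilon$ of $x$.

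The remaining issue is uniformity: the $\delta$ above depends on $x$, while the statement asks for one $\delta$ valid for all $x\in L\cap\scrT$. To handle this I would note that $L\cap\scrT$ is compact (it is a closed subset of the bounded region where $L$ meets the finitely many polyhedra of $\scrT$, by transversality), cover it by finitely many of the balls $B(x_i,\epsilon/2)$ as above, and take $\delta=\min_i\delta_i$; any $x\in L\cap\scrT$ lies in some $B(x_i,\epsilon/2)$, and the argument applied to $B(x_i,\epsilon)$ produces a point of $L\cap\scrA_t$ within $\epsilon/2$ of $x_i$, hence within $\epsilon$ of $x$. One should also arrange at the outset that $\epsilon$ is small enough (relative to the distance from $L\cap\scrT$ to the polyhedra of $\scrT$ not met by $L$, and to the spacing of the points of $L\cap\scrT$) that each $B(x_i,\epsilon)$ meets $\scrT$ in exactly one point and $\partial B(x_i,\epsilon)$ avoids $\scrT$; shrinking $\epsilon$ only strengthens the conclusion, so this is harmless.

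The main obstacle I expect is the homological bookkeeping in the second paragraph: one must be careful that the class of $\gamma=\partial B$ really is positive with respect to the orientation conventions of Lemma~\ref{L:homology_basis} for the complex $L\cap\scrA_t$ — in particular that the bounded component of $L\cap\scrA_t$ it would enclose is the one containing $x$ — and that Henriques's weak convexity is being invoked with the codimension and the dimension of $L$ matched correctly ($\dim L=k{+}1$, $\operatorname{codim}\calV_{a(t)}=k{+}1$). A secondary subtlety is that $\scrA_t$ need not be a polyhedral complex, so Lemma~\ref{L:homology_basis} must be applied to a sufficiently fine triangulation of $(L,L\cap\scrA_t)$; since $\scrA_t$ is a closed semianalytic set with finitely many components, such a triangulation exists and the argument goes through unchanged.
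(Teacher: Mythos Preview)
Your argument has a genuine gap: the central step is circular. You want to conclude that $B\cap\scrA_t\neq\emptyset$, but to invoke Henriques' weak $k$-convexity you need $[\gamma]$ to be a \emph{positive} class in $\witi H_k(L\cap\scrA_t^c)$, and by your own parenthetical this happens ``precisely when $B\cap\scrA_t\neq\emptyset$''. If $B\cap\scrA_t=\emptyset$ then $\gamma=\partial B$ with $B\subset L\cap\scrA_t^c$, so $[\gamma]=0$ already in $\witi H_k(L\cap\scrA_t^c)$; weak convexity then says nothing, and certainly does not force $\iota_k[\gamma]\neq0$. In short, weak convexity only prevents positive classes from dying---it cannot manufacture positivity out of thin air, and you have no independent reason for $[\gamma]$ to be nonzero.

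What the paper does to break this circle is the step you are missing. Kuratowski convergence gives a point $y'\in\scrA_t$ close to $x_p$, but \emph{not} necessarily in $L$. One then passes to the parallel plane $L'=L+(y'-x_p)$; in $L'$ the translated ball $\beta'$ contains $y'\in\scrA_t$, so $[\gamma']$ \emph{is} genuinely positive in $\witi H_k(L'\cap\scrA_t^c)$, and Henriques gives $\iota_k[\gamma']\neq0$ in $\witi H_k(\scrA_t^c)$. To transfer this information back to $L$, one builds a short cylinder $\Gamma$ of translated spheres joining $\gamma$ to $\gamma'$; the key is to arrange in advance (via a compact set $Z$ of such translates, disjoint from $\scrT$, hence eventually from $\scrA_t$ by Lemma~\ref{L:compacta}) that $\Gamma\subset\scrA_t^c$. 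Then $\gamma'=\partial(\Gamma\cup\beta)$, so $\Gamma\cup\beta$ cannot lie in $\scrA_t^c$; since $\Gamma$ does, $\beta$ must meet $\scrA_t$. This parallel-plane-plus-cylinder device is the missing idea.

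A minor point: since $L$ is transverse to the finite polyhedral complex $\scrT$ of pure codimension $k{+}1$, the set $L\cap\scrT$ is actually finite, so your compactness covering for uniformity is unnecessary---one simply treats each point separately and takes the minimum of finitely many $\delta$'s.
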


We deduce our main result.

\begin{theorem}\label{T:main}
 The complement $\scrT^c$ of the nonarchimedean amoeba $\scrT$ of a variety $V\subset(\K^\times)^n$ of codimension
 $k{+}1$ is weakly $k$-convex.
 If $V$ is a complete intersection, then the complement is $k$-convex.
\end{theorem}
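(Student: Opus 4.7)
The plan is to realize any failure of the required injectivity for $\scrT^c$ as a failure for the scaled amoeba $\scrA_t = \tfrac{-1}{\log t}\scrA(\calV_{a(t)})$ for all sufficiently small $t > 0$, contradicting the theorem of Henriques in general, or of Bushueva--Tsikh when $V$ (and hence each $\calV_{a(t)}$) is a complete intersection. Since positive scaling is a homeomorphism of $\R^n$, all higher convexity properties of $\scrA(\calV_{a(t)})^c$ pass to $\scrA_t^c$. By Henriques' moving lemma we may assume $L$ is transverse to $\scrT$. Suppose for contradiction that $\zeta \in \witi{H}_k(L \cap \scrT^c)$ satisfies $\iota_k(\zeta) = 0$, with $\zeta$ a nonzero positive class in the weak case and simply a nonzero class in the complete-intersection case. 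Using Lemma~\ref{L:homology_basis} in $L$, write $\zeta = \sum_i b_i [\gamma_i]$, where $\gamma_i = \partial\beta_i$ for disjoint open neighborhoods $\beta_i \subset L$ of the bounded connected components $\scrT_1,\ldots,\scrT_m$ of $L\cap\scrT$, shrunk so that $\beta_i\cap\scrT = \beta_i\cap\scrT_i$. The hypothesis provides a $(k{+}1)$-chain $Z$ in $\scrT^c$ with $\partial Z = \gamma := \sum_i b_i \gamma_i$.

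The supports of $\gamma$ and $Z$ are compact subsets of $\scrT^c$, so Lemma~\ref{L:compacta} yields $\delta_1 > 0$ with $\gamma \cap \scrA_t = Z \cap \scrA_t = \emptyset$ for $0<t<\delta_1$. Thus $\gamma$ is a cycle in $L \cap \scrA_t^c$ whose class vanishes in $\witi{H}_k(\scrA_t^c)$, with $Z$ as witness. For each index $i$ with $b_i \neq 0$, pick $x_i \in \scrT_i$ and apply Lemma~\ref{L:technical} with $\epsilon$ small enough that the $\epsilon$-ball about $x_i$ lies in $\beta_i$: for $t$ below some $\delta_2 > 0$, there is a point $y_i \in L \cap \scrA_t$ with $d(x_i, y_i) < \epsilon$. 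The connected component of $L \cap \scrA_t$ through $y_i$ cannot cross $\gamma_i$ (which lies in $\scrA_t^c$), so it is contained in the compact set $\overline{\beta_i}$ and is therefore a bounded component of $L \cap \scrA_t$.

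Let $\scrA_{t,i,1},\ldots,\scrA_{t,i,\ell_i}$ (with $\ell_i \geq 1$ whenever $b_i \neq 0$) denote the bounded components of $L\cap\scrA_t$ lying in $\beta_i$, with outward boundary cycles $\gamma_{t,i,j}$. By Lemma~\ref{L:homology_basis} applied in $L$, these form part of a basis of $\witi{H}_k(L \cap \scrA_t^c)$. The chain $\beta_i \setminus (L \cap \scrA_t)$ has boundary $\gamma_i - \sum_j \gamma_{t,i,j}$ in $L \cap \scrA_t^c$ (outward orientations throughout), so $[\gamma_i] = \sum_j [\gamma_{t,i,j}]$ in $\witi{H}_k(L \cap \scrA_t^c)$. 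Therefore
$$[\gamma] \;=\; \sum_{i,j} b_i\,[\gamma_{t,i,j}] \;\in\; \witi{H}_k(L \cap \scrA_t^c)$$
is nonzero, because distinct pairs $(i,j)$ index distinct basis elements and some $b_i \neq 0$; and it is positive whenever all $b_i \geq 0$. Its vanishing in $\witi{H}_k(\scrA_t^c)$ contradicts the weak $k$-convexity of Henriques in general, or the $k$-convexity of Bushueva--Tsikh when $\calV_{a(t)}$ is a complete intersection.

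The main obstacle is the localization in the second paragraph, namely converting the purely metric Kuratowski approximation $\scrA_t \to \scrT$ into the combinatorial assertion that each $\beta_i$ captures a bounded component of $L \cap \scrA_t$ and that the corresponding basis cycles assemble to express $[\gamma_i]$ \emph{positively}. This is exactly what Lemma~\ref{L:technical} is designed to provide; its proof is presumably the real technical heart of the argument, relying on transversality of $L$ together with the weak $k$-convexity already available for the archimedean amoebas $\scrA_t^c$ to prevent the local amoeba from ``escaping'' before landing near the tropical point $x_i$.
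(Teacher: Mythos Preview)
Your argument is correct and follows the same strategy as the paper: use Lemma~\ref{L:compacta} to carry the bounding chain $Z$ into $\scrA_t^c$, use Lemma~\ref{L:technical} to guarantee each relevant $\beta_i$ meets $L\cap\scrA_t$, and then invoke Henriques (resp.\ Bushueva--Tsikh) on $\scrA_t^c$. The only cosmetic difference is that you explicitly decompose each $[\gamma_i]$ as a positive sum $\sum_j[\gamma_{t,i,j}]$ of basis classes for $\witi{H}_k(L\cap\scrA_t^c)$, whereas the paper simply observes that the $[\gamma_p]$ themselves are linearly independent and lie in the positive cone of $\witi{H}_k(L\cap\scrA_t^c)$; these are equivalent.
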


\begin{remark}
 Both Lemma~\ref{L:technical} and Theorem~\ref{T:main} use only that $\scrT$ is a polyhedral complex of pure
 codimension $k{+}1$ that is the Kuratowski limit of a family $\{\scrA_t\mid t>0\}$ whose complements are (weakly)
 $k$-convex.
 Thus we have proven a stronger result about polyhedral complexes that are limits of sets whose complements are
 $k$-convex. 
\end{remark}

\begin{proof}[Proof of Theorem~$\ref{T:main}$]
 It suffices to use affine $(k{+}1)$-planes $L$ that meet $\scrT$
 transversally to test the $k$-convexity of $\scrT^c$.
 Let $L$ be an oriented affine $(k{+}1)$-plane that meets $\scrT$ transversally and let
 $\zeta\in\witi{H}_k(L\cap\scrT^c)$ be a cycle.

 Let $\{x_p\mid p\in\Pi\}$ be the finite set $L\cap\scrT$.
 Let $\epsilon>0$ be such that if \defcolor{$\beta_p$} is the closed ball of radius $\epsilon$ in $L$ centered at
 $x_p$ for $p\in\Pi$, then these balls are disjoint.
 If $\defcolor{\gamma_p}:=\partial\beta_p$ for $p\in\Pi$, then the cycle classes of 
 $\{\gamma_p\mid p\in\Pi\}$ are a basis for $\witi{H}_k(L\cap\scrT^c)$ and they span its cone of positive cycles.
 Thus there is an integer combination $\gamma$ of the $\gamma_p$ with $[\gamma]=\zeta$.
 Suppose that $\iota_k(\zeta)=0$ in $\witi{H}_k(\scrT^c)$.
 Then there is a $(k{+}1)$-chain $Z$ in $\scrT^c$ with $\partial Z=\gamma$.

 Let $\scrA_t$ for $t$ positive and near $0$ be a family of scaled amoebas~\eqref{Eq:scaled_amoebas} of varieties
 of codimension $k{+}1$ that converges to $\scrT$.
 As $Z$ is compact, Lemma~\ref{L:compacta} implies that there is some $\delta>0$ such that $Z$ is disjoint from
 $\scrA_t$ for all $0<t<\delta$. 
 As $[\partial Z]=\zeta$, we conclude that $\iota_k(\zeta)=0$ in $\witi{H}_k(\scrA_t^c)$ for any 
 $0<t<\delta$.

 By Lemma~\ref{L:technical}, after possibly shrinking $\delta$, if $p\in\Pi$ and $0<t<\delta$, then 
 $d(x_p,L\cap\scrA_t)<\epsilon$.
 Thus for all $0<t<\delta$, the scaled amoeba $\scrA_t$ meets each ball $\beta_p$ and each sphere $\gamma_p$ is
 disjoint from $\scrA_t$, as $\gamma_p\subset Z$.
 In particular, the cycle classes $[\gamma_p]$ for $p\in\Pi$ are linearly independent in
 $\witi{H}_k(L\cap\scrA_t^c)$ and they span a subset of its positive cone.

 By Henriques' Theorem~\cite[Thm.~4.1]{He04}, the complement $\scrA_t^c$ of the scaled amoeba is weakly $k$-convex.
 Since $\iota_k(\zeta)=0$, $\zeta$ is not in the positive cone of $\witi{H}_k(L\cap\scrA_t^c)$ and so it is not a
 positive integer combination of the $[\gamma_p]$. 
 Consequently, $\zeta$ is not a positive class of $\witi{H}_k(L\cap\scrT^c)$.
 This shows that $\scrT^c$ is weakly $k$-convex.

 Now suppose that $V$ is a complete intersection.
 By Proposition~\ref{P:limit}, we may assume that $\scrA_t$ is the amoeba of a complete intersection, and so by the
 Theorem of Bushueva and Tsikh~\cite{BT12}, $\scrA_t^c$ is $k$-convex.
 As $\iota_k(\zeta)=0$ in  $\witi{H}_k(\scrA_t^c)$, we conclude that $\zeta=0$ in $\witi{H}_k(L\cap\scrA_t^c)$.
 Since the cycle classes $[\gamma_p]$ are linearly independent in both 
 $\witi{H}_k(L\cap\scrA_t^c)$ and $\witi{H}_k(L\cap\scrT)$, we conclude that 
 $\zeta=0$ in $\witi{H}_k(L\cap\scrT)$, and so $\scrT^c$ is $k$-convex.
\end{proof}

\begin{proof}[Proof of Lemma~$\ref{L:technical}$]
 The tropical variety $\scrT$ admits the structure of a finite polyhedral complex of dimension $n{-}k{-}1$.
 Since $L$ is has dimension $(k{+}1)$ and is transverse to $\scrT$, it meets $\scrT$ only in polyhedra of maximal
 dimension.
 Let $\{\sigma_p\mid p\in\Pi\}$ be the set of maximal polyhedra of $\scrT$ meeting $L$.
 If we set $\defcolor{x_p}:=L\cap\sigma_p$, then $\{x_p\mid p\in\Pi\}=L\cap\scrT$.
 Since $\scrT\cap L$ is finite, it suffices to prove the lemma for a single point $x_p\in L\cap\scrT$.

 Shrinking $\epsilon$ if necessary, we may assume that if $\sigma$ is a polyhedron of $\scrT$, then
\begin{enumerate}
 \item If $\sigma$ does not meet $L$, then $d(L,\sigma)>2\epsilon$,
 \item If $\sigma\neq \sigma_p$, then $d(x_p,\sigma)>2\epsilon$.
\end{enumerate}

 A plane $L'$ is \demph{parallel} to $L$ if $L'=L+v$ for some $v\in\R^n$.
 By Assumption (1), if $L'$ is parallel to $L$ with $d(L',L)\leq 2\epsilon$, then $L'$ meets $\sigma_p$
 transversally in a point \defcolor{$x'_p$} in the relative interior of $\sigma_p$.
 Let \defcolor{$\theta$} be the minimum angle between $\sigma$ and $L$.
 Then $\pi/2\geq\theta>0$, and if $L'$ is parallel to $L$ with $d(L',L)\leq\rho$ with $\rho\leq 2\epsilon$, then
 $d(x_p',x_p)<\rho/\sin\theta$. 
 Let $\defcolor{\beta}\subset L$ be the ball of radius $\epsilon$ centered at $x_p$ and
 $\defcolor{\gamma}=\partial\beta$ the corresponding sphere.
 Observe that $d(\gamma,\sigma_p)=\epsilon\sin\theta$.

 Let \defcolor{$Z$} be the union of translates $\gamma+(z-x_p)$ where $z\in\sigma_p$ with $d(z,x_p)\leq\epsilon$.
 Then $Z$ is compact and it lies in the closed ball centered at $x_p$ of radius $2\epsilon$.
 By construction, $Z\cap\sigma_p=\emptyset$, and by Assumption (2), $Z$ meets no other polyhedron of $\scrT$, and
 is therefore disjoint from $\scrT$.
 By the definition of Kuratowski limit and Lemma~\ref{L:compacta}, there is a $\delta>0$ such that if $0<t<\delta$
 then $d(x,\scrA_t)<\frac{1}{2}\epsilon\sin\theta$ and $\scrA_t$ is disjoint from $Z$.

 Fix a positive $t<\delta$.
 Let $y'\in\scrA_t$ be a point with $d(x_p,y')<\frac{1}{2}\epsilon\sin\theta$.
 Set $\defcolor{L'}:=L+(y'-x_p)$, a plane parallel to $L$ with $d(L',L)<\frac{1}{2}\epsilon\sin\theta$.
 Set $\defcolor{x'_p}:=L'\cap\sigma_p$, then $d(x'_p,x_p)<\epsilon/2$.
 Let $\defcolor{\Gamma}\subset Z$ be the cylinder
\[
   \bigcup\{ \gamma + t(x'_p-x_p)\;\mid\; t\in[0,1]\}\,.
\]

 Observe that
\[
   d(x_p',y')\ \leq\ d(x'_p,x_p)\ +\ d(x_p,y')\ <\ 
    \tfrac{1}{2}\epsilon + \tfrac{1}{2}\epsilon\sin\theta\ \leq\ \epsilon\,.
\]
 Thus $y'\in\beta'$, where $\defcolor{\beta'}=\beta+(x_p'-x_p)$ is the ball in $L'$ centered at $x_p'$ of radius
 $\epsilon$. 
 Since $\defcolor{\gamma'}:=\partial\beta'\subset Z$, it is disjoint from $\scrA_t$, and so we have that 
\[
   0\ \neq\ [\gamma']\ \in\ \witi{H}_k(L'\cap\scrA_t^c)
\]
 is a positive cycle (we fix an orientation of $L'$).
 By Henriques' Theorem~\cite[Thm.~4.1]{He04}, $\scrA_t^c$ is weakly $k$-convex, and so 
\[
  \iota_k[\gamma']\ \in\ \witi{H}_k(\scrA_t^c)
\]
 is also non-zero.
 As $\gamma'$ is the boundary of the cycle $\Gamma \cup\beta$, we canot have that $\Gamma\cup\beta$ lies
 in the complement $\scrA_t^c$, and so $(\Gamma\cup\beta)\cap\scrA_t\neq\emptyset$.
 Since $\Gamma\subset\scrA_t^c$, we conclude that $\beta$ meets $\scrA_t$.
 As the boundary $\gamma$ of $\beta$ is disjoint from $\scrA_t$, there is a point $y\in\scrA_t$ in the
 interior of $\beta$, which proves the lemma.
\end{proof}

\section*{Acknowledgements}
We thank Mattias Jonsson and Eric Katz for their help in understanding their work.

\def\cprime{$'$}
\providecommand{\bysame}{\leavevmode\hbox to3em{\hrulefill}\thinspace}
\providecommand{\MR}{\relax\ifhmode\unskip\space\fi MR }
\providecommand{\MRhref}[2]{%
  \href{http://www.ams.org/mathscinet-getitem?mr=#1}{#2}
}
\providecommand{\href}[2]{#2}

\end{document}